\theoremstyle{plain}
\newtheorem{theorem}{Theorem}
\newtheorem{prop}[theorem]{Proposition}
\newtheorem{cor}[theorem]{Corollary}
\newtheorem{lemma}[theorem]{Lemma}
\theoremstyle{definition}
\newtheorem{defn}[theorem]{Definition}
\newtheorem{rmk}[theorem]{Remark}
\newtheorem*{ex*}{Example}
\newcommand\sO{{\mathcal O}}
\newcommand\sA{{\mathcal A}}
\newcommand\sN{{\mathcal N}}
\newcommand\sL{{\mathcal L}}
\newcommand\sU{{\mathcal U}}
\newcommand\sF{{\mathcal F}}
\newcommand\sG{{\mathcal G}}
\newcommand\sE{{\mathcal E}}
\newcommand\sI{{\mathcal I}}
\newcommand\sM{{\mathcal M}}
\newcommand\sQ{{\mathcal Q}}
\DeclareMathOperator{\Pic}{Pic}
\DeclarePairedDelimiterX\set[1]\lbrace\rbrace{#1}
\title[The decomposition theorem of Catanese, Fujita, and Kawamata]
{Singular hermitian metrics and the decomposition theorem of Catanese, Fujita, and Kawamata}
\author{Luigi Lombardi}
\address{Dipartimento di Matematica  \\ Università degli Studi di Milano Statale\\Via Cesare Saldini 50, Milan 20133, Italy}
 \email{\url{luigi.lombardi@unimi.it}}
\author{Christian Schnell}
\address{Department of Mathematics \\ Stony Brook University, Stony Brook, NY 11794-3651}
\email{\url{christian.schnell@stonybrook.edu}} 
\begin{document}
\maketitle
\begin{abstract}
We prove that a torsion-free sheaf $\sF$ endowed with a 
singular hermitian metric with semi-positive curvature and satisfying  the minimal extension property
admits a direct-sum decomposition 
   $\sF \simeq \sU \oplus \sA$ where  $\sU$ is a hermitian flat bundle  and $\sA$ is a generically 
ample sheaf. The result applies to the case of direct images of relative pluricanonical  bundles
$f_*  \omega_{X/Y}^{\otimes m}$ under a surjective morphism  $f\colon X \to Y$ of smooth projective varieties with $m\geq 2$.
This extends previous results of   Fujita,  Catanese--Kawamata, and Iwai.
 
\end{abstract}

\section{Introduction}

Let $f \colon X \to Y$ be a fibration of  smooth projective varieties (over the
complex numbers) and let $\omega_{X/Y} = \omega_X \otimes f^*\omega_Y^{-1}$ be the relative
canonical bundle.
Motivated by earlier work of
Fujita, 
 Catanese and Kawamata \cite[Theorem 1.2]{CK} proved  a direct-sum decomposition 
\begin{equation}\label{eq:ck}
f_* \omega_{X/Y} \; \simeq \; \sU \, \oplus \, \sA
\end{equation}
where $\sU$ is a hermitian flat bundle and  $\sA$
is a generically ample sheaf, if not zero.
We recall that
 a coherent
torsion-free sheaf $\sA$ on a smooth projective variety is   \emph{generically ample} if  its
restriction  to a general complete intersection smooth curve is an ample vector bundle. In dimension one we say 
that $\sA$ is generically ample if it is ample.
When $Y$ is  a smooth projective curve, the decomposition \eqref{eq:ck} is 
Fujita's   second decomposition in \cite{F}  (see also \cite[Theorem~3.3]{CD}). 
Motivated by \eqref{eq:ck}, we introduce the following definition.
\begin{defn}
A coherent torsion-free sheaf $\sF$ 
 admits a \emph{Catanese--Fujita--Kawamata decomposition} if  there exists an isomorphism
 $\sF \simeq \sU \oplus \sA$ where $\sU$ is a hermitian flat bundle, and
 $\sA$ is either a  generically ample sheaf or the zero sheaf.
\end{defn}

%

In this paper  we extend
  decomposition  \eqref{eq:ck} to direct images of relative
\emph{pluricanonical} bundles. 

\begin{theorem}\label{thm:splittingpluri}
If $f \colon X \to Y$ is a surjective morphism of smooth projective complex varieties, 
then  
$f_* \omega_{X/Y}^{\otimes m}$ admits a Catanese--Fujita--Kawamata decomposition  for every $m\geq 2$. 
\end{theorem}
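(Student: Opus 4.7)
The plan is to derive Theorem~\ref{thm:splittingpluri} from the abstract metric splitting statement highlighted in the abstract: every torsion-free sheaf on $Y$ carrying a singular hermitian metric with semi-positive curvature and the minimal extension property admits a Catanese--Fujita--Kawamata decomposition. Granting this, the theorem reduces to constructing such a metric on $f_* \omega_{X/Y}^{\otimes m}$ for every $m\geq 2$.

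The natural candidate is the $m$-th relative Bergman kernel metric on $f_* \omega_{X/Y}^{\otimes m}$, obtained by assembling the fiberwise $L^{2/m}$-pairings on $H^0(X_y, \omega_{X_y}^{\otimes m})$ as $y$ varies over the smooth locus of $f$. Semi-positivity of curvature in this setting is standard: it goes back to Berndtsson--P\u{a}un and P\u{a}un--Takayama, with the torsion-free direct image case treated by Hacon--Popa--Schnell. The minimal extension property reduces to an Ohsawa--Takegoshi-type theorem with optimal constant for pluricanonical sections, namely the extension of $L^{2/m}$-integrable $m$-canonical forms from a smooth fiber to a neighborhood with controlled norm. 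The assumption $m\geq 2$ enters precisely here, and the required optimal extension is available by work of Cao, P\u{a}un, Guan--Zhou, and Dutta--Murayama.

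The heart of the argument is the abstract splitting statement itself. The plan is to identify inside $\sF$ a subsheaf $\sU\subseteq \sF$ consisting of sections $s$ whose pointwise norm $|s|_h$ is locally constant, equivalently pluriharmonic. Given $v\in \sF_y$ at a general point, the minimal extension property produces a section $s$ with $s(y)=v$ and norm bounded by $|v|_h$; since the semi-positive curvature makes $\log|s|_h$ plurisubharmonic, the mean value inequality forces $|s|_h\equiv|v|_h$ on a neighborhood, producing a flat section. Varying the basepoint and invoking semicontinuity should exhibit $\sU$ as a hermitian flat subbundle that splits off $\sF$ metrically. The complement $\sA=\sF/\sU$ then inherits a semi-positive metric with no flat directions, and generic ampleness would follow by restricting to a general complete intersection curve and appealing to the one-dimensional case, handled by Fujita and recently extended to the pluricanonical setting by Iwai.

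I expect the main obstacle to lie entirely in the abstract splitting theorem, not in the metric construction. Passing from analytic data (plurisubharmonic variation and extension with optimal constant) to an algebraic direct-sum decomposition is delicate, because a singular metric may degenerate on proper analytic subsets and classical differential-geometric techniques (Bochner formulas, holonomy reduction) are not directly available. The crucial technical point is to show that the flat sections produced by the minimal extension property span an honest subbundle of $\sF$ and that this subbundle admits a metric orthogonal complement, which is what ultimately forces strict positivity on $\sA$.
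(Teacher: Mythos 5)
Your reduction of Theorem~\ref{thm:splittingpluri} to the abstract splitting result is exactly what the paper does: one cites P\u{a}un--Takayama and Hacon--Popa--Schnell for the existence of a singular hermitian metric on $f_*\omega_{X/Y}^{\otimes m}$ with semi-positive curvature and the minimal extension property (for $m\geq 2$), and then invokes Theorem~\ref{thm:splitting}. Your identification of the relative Bergman-kernel metric as the candidate, and of Ohsawa--Takegoshi with optimal constant as the source of the minimal extension property, matches the intended inputs. Up to this point the proposal is correct and the same as the paper.

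Where it goes wrong is in your sketch of Theorem~\ref{thm:splitting} itself, which you correctly identify as the heart of the matter. You assert that semi-positive curvature makes $\log|s|_h$ plurisubharmonic for a local holomorphic section $s$ of $\sF$, and you use the sub-mean-value inequality together with the minimal extension estimate to force $|s|_h$ to be constant. This is backwards: the definition of semi-positive curvature for singular hermitian metrics (as in \cite[\S 19]{HPS}) is that $\log|u|_{h^\vee}$ is plurisubharmonic for every local section $u$ of the \emph{dual} sheaf $\sF^\vee$. For sections of $\sF$ itself, positive curvature pushes the norm towards being \emph{plurisuperharmonic}, not plurisubharmonic, so the mean-value argument does not apply and the construction of a subsheaf of flat sections breaks down. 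Relatedly, you flag but do not resolve the problem of why such sections would span a coherent direct summand.

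The paper proves Theorem~\ref{thm:splitting} by a genuinely different route, working top-down rather than bottom-up. It takes the Harder--Narasimhan filtration of $\sF$ with respect to a very ample $A$ and examines the minimal destabilizing quotient $\sQ = \sF/\sN_{d-1}$ with determinant $\sL$. If $\deg_A\sL>0$, Flenner's restriction theorem plus a theorem of Brenner shows $\sF$ is already generically ample and one takes $\sU=0$. If $\deg_A\sL=0$, a Lefschetz hyperplane argument gives $c_1(\sL)=0$ in $H^2(Y,\mathbb{R})$; the Cao--P\u{a}un / Hacon--Popa--Schnell flatness criterion (Theorem~\ref{thm:c1}) then forces $\sQ$ to be hermitian flat, Proposition~\ref{propsplit} (via Schur's lemma on an irreducible factor) splits the surjection $\sF\twoheadrightarrow\sQ$, and a second restriction-to-curve computation shows $\sN_{d-1}$ is generically ample. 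So the flat piece is extracted as a destabilizing quotient, not as a subsheaf of constant-norm sections, and the analytic input is concentrated entirely inside the cited degree-zero flatness criterion rather than in a direct mean-value argument.
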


When $Y$ is a smooth projective curve, Theorem \ref{thm:splittingpluri} was proved by Iwai 
in \cite[Theorem 1.4]{Iw}  by showing  in greater generality that
the reflexive hull  $ \big( f_* \omega_{X/Y}^{\otimes m} \big)^{**}$ admits a Catanese--Fujita--Kawamata decomposition
for any  smooth projective variety $Y$ and  $m\geq 1$.

While the proof of \eqref{eq:ck} is Hodge-theoretic, 
the proof of Theorem \ref{thm:splittingpluri} is based on the fact that, 
for any $m\geq 2$, the sheaf
 $f_*\omega_{X/Y}^{\otimes m}$ carries a  canonical singular hermitian metric with 
semi-positive curvature.
 Furthermore, this  metric satisfies  the so-called \emph{minimal extension property} 
(\emph{cf}. \S\ref{sec:mep}, \cite[Theorem 1.1]{PT} and  \cite[Theorem 27.1]{HPS}).
This is a property that stems from  Ohsawa--Takegoshi's extension theorem with optimal bounds
and allows one to extend local sections across subsets of
measure zero with a control on the $L^2$-norm. 
Instances of bundles satisfying the minimal extension property are
pseudo-effective line bundles and Nakano semi-positive vector bundles. 
 Theorem \ref{thm:splittingpluri} is an application of the following theorem.

\begin{theorem}\label{thm:splitting}
Let $\sF$ be a coherent torsion-free  sheaf on a smooth projective variety 
$Y$ endowed with a singular hermitian  metric
with semi-positive curvature and satisfying the minimal extension property. 
Then $\sF$ admits a Catanese--Fujita--Kawamata decomposition.
\end{theorem}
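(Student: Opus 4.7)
My plan is to isolate the Hermitian flat summand $\sU \subset \sF$ by slope considerations, verify generic ampleness of the quotient from the curve case, and then split the resulting short exact sequence using the minimal extension property.

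First I would fix an ample polarization $H$ on $Y$ and pass to a general complete intersection curve $C \subset Y$ cut out by sufficiently positive multiples of $H$. The restriction $\sF|_C$ inherits a singular Hermitian metric with semi-positive curvature and the minimal extension property, so by the curve case of the theorem (Fujita \cite{F}, with refinements by Catanese--Dettweiler \cite{CD} and Iwai \cite{Iw}) it decomposes as $\sF|_C \simeq \sU_C \oplus \sA_C$ with $\sU_C$ Hermitian flat and $\sA_C$ ample or zero; moreover $\sU_C$ is the maximal subbundle of degree zero. In particular the Harder--Narasimhan slopes of $\sF$ with respect to $H$ are all nonnegative.

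Next I would define $\sU \subset \sF$ as the maximal saturated subsheaf satisfying $c_1(\sU) \cdot H^{n-1} = 0$, so that $\sU|_C = \sU_C$ for general $C$. The induced metric on $\sU$ is a singular Hermitian metric with semi-positive curvature and MEP whose first Chern class is numerically trivial against $H^{n-1}$. A criterion descending from Bando--Siu in its singular-metric incarnation compatible with the minimal extension property (\emph{cf.}\ the analysis of numerically flat sheaves in \cite{HPS}) then forces the curvature of $\sU$ to vanish identically, so $\sU$ is a Hermitian flat bundle. By maximality $\sA := \sF/\sU$ is torsion-free, and $\sA|_C \simeq \sA_C$ is ample, so $\sA$ is generically ample (or zero, in which case there is nothing further to prove).

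The remaining step, splitting the short exact sequence $0 \to \sU \to \sF \to \sA \to 0$, is where I expect the main obstacle. One route is to prove $\text{Ext}^1(\sA, \sU) = 0$, using that tensoring the generically ample sheaf $\sA$ with the Hermitian flat bundle $\sU^\vee$ preserves generic ampleness and then applying a Kawamata--Viehweg-type vanishing theorem compatible with unitary twists. The more direct route available here is to exploit the minimal extension property itself: local frames of $\sU$ lift to local sections of $\sF$ of the same pointwise norm up to a rank-dependent constant, and these lifts determine orthogonal projections $\sF \to \sU$ whose uniqueness, controlled by the $L^2$ bound in MEP, lets them patch to a global retraction. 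The delicate aspect in either approach is the interplay between the metric singularities and the locus where $\sF$ fails to be locally free; the minimal extension property is precisely the axiom designed to make this patching work.
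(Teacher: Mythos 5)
The central step in your proposal is wrong in a way that the paper's actual argument is specifically designed to avoid, so this is a genuine gap rather than a different route.

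You construct the Hermitian flat piece as a \emph{subsheaf} $\sU \subset \sF$ (``the maximal saturated subsheaf with $c_1(\sU)\cdot H^{n-1}=0$'') and then claim that ``the induced metric on $\sU$ is a singular Hermitian metric with semi-positive curvature and MEP.'' That claim is false: semi-positive curvature and the minimal extension property pass to torsion-free \emph{quotients} of $(\sF,h)$, not to subsheaves. (This is exactly the content of Propositions~\ref{prop:incl}--\ref{prop:quotient}; the restricted metric on a sub-object of a Griffiths/Nakano semi-positive object has curvature of the opposite sign.) Without that, you cannot invoke any flatness criterion — there is no ``Bando--Siu in its singular-metric incarnation compatible with MEP'' that applies to a subsheaf with the restricted metric, and the tool the paper actually uses (Theorem~\ref{thm:c1}, i.e. HPS Theorem~26.1 / Cao--P\u{a}un) requires semi-positive curvature and MEP as hypotheses. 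The paper therefore goes in the \emph{opposite} direction: it takes the Harder--Narasimhan filtration of $\sF$ with respect to an ample class, notes that all HN slopes are $\geq 0$ because $\det(\sF)$ is pseudo-effective, and works with the minimal destabilizing \emph{quotient} $\sQ = \sF/\sN_{d-1}$ — the unique HN graded piece whose slope could be zero. Being a quotient, $\sQ$ inherits semi-positive curvature and MEP (Propositions~\ref{prop:incl}, \ref{prop:quotient}); a surface restriction argument shows $c_1(\det\sQ)=0$; Theorem~\ref{thm:c1} then gives flatness of $\sQ$; and Proposition~\ref{propsplit} (a version of HPS Theorem~26.4, using Schur's lemma for the irreducible unitary factors of $\sQ$) splits the surjection $\sF \twoheadrightarrow \sQ$. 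The other summand $\sN_{d-1}$ has all HN slopes strictly positive, so Flenner's restriction theorem plus Brenner's ampleness criterion on a general complete-intersection curve gives generic ampleness.

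A few further problems with the proposal: there is no pre-existing ``curve case of the theorem'' to cite — Fujita and Catanese--Dettweiler treat $f_*\omega_{X/Y}$ (and Iwai the reflexive hull of $f_*\omega_{X/Y}^{\otimes m}$), not a general torsion-free sheaf with a singular Hermitian metric, so you cannot feed $\sF|_C$ into their results; it is also not obvious (and the paper never uses) that MEP restricts to a general curve $C \subset Y$. Finally, both of your proposed splitting arguments are unsupported: generic ampleness of $\sA$ does not give $\mathrm{Ext}^1(\sA,\sU)=0$ in higher dimension (even honest ampleness would not, without more), and the ``local lifts patch via the $L^2$ bound'' sketch is not a proof — the minimal extension property produces sections with controlled norm, not canonical ones. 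The paper's splitting lemma sidesteps all of this precisely because the flat factor sits as a quotient rather than a subsheaf.
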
 

\begin{proof}[Proof of Theorem \ref{thm:splittingpluri}]
By \cite[Theorem 1.1]{PT} and 
\cite[Theorem 27.1]{HPS} the pushforward $f_*\omega_{X/Y}^{\otimes m}$ admits a 
singular hermitian metric with semi-positive curvature and  satisfying  the minimal extension 
property for any $m\geq 2$.  The result follows by  Theorem \ref{thm:splitting}. 
\end{proof}

As an application  of Theorem \ref{thm:splittingpluri}, we prove a structure theorem 
 for the sheaves  $f_*\omega_{X/Y}^{\otimes m}$ with  $m\geq 2$.

%

\begin{theorem}\label{thm:puretype}
Let $f\colon X \to Y$ be a fibration of smooth projective  complex
varieties and denote   $J = \{ \, m \in \mathbb{N}_{\geq 2} \, \big | \, f_*\omega_{X/Y}^{\otimes m} \neq 0 \, \}$.
 Suppose   
there exists an open subset $U \subset Y$ 
such that: ${\rm codim} (Y \smallsetminus U) \geq 2$, the morphism $f$ is smooth over $U$, and
$\omega_{V/U}$ is $f|_V$-semi-ample where $V=f^{-1} (U)$.
Then   $f_*\omega_{X/Y}^{\otimes m}$ is  either generically ample for all  $m\in J$, or hermitian flat for all $m\in J$.
\end{theorem}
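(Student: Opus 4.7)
The plan is to apply Theorem~\ref{thm:splittingpluri} to each $f_*\omega_{X/Y}^{\otimes m}$ for $m\in J$, obtaining Catanese--Fujita--Kawamata decompositions $\sF_m := f_*\omega_{X/Y}^{\otimes m} = \sU_m \oplus \sA_m$, and then to show via the multiplicative structure of the relative pluricanonical ring that this decomposition is uniformly ``pure'' across $m\in J$. Using the semi-ampleness of $\omega_{V/U}$, I first fix $m_0\in J$ with $m_0\ge 2$ such that $\omega_{V/U}^{\otimes m_0}$ is $f|_V$-globally generated, and take the resulting relative Iitaka morphism $\phi\colon V\to W$ over $U$: one has $\omega_{V/U}^{\otimes m_0}\simeq \phi^*H$ for an $f|_W$-ample line bundle $H$ on $W$, and the connectedness of the fibers of $\phi$ gives $\phi_*\sO_V = \sO_W$, so $\sF_{km_0}|_U \simeq (f|_W)_*H^{\otimes k}$. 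Serre vanishing on $W/U$ yields, for $k\gg 0$ and every $m\in J$, the surjections
\[
\operatorname{Sym}^k \sF_{m_0}\big|_U \twoheadrightarrow \sF_{km_0}\big|_U, \qquad \sF_m\big|_U \otimes \sF_{km_0}\big|_U \twoheadrightarrow \sF_{m+km_0}\big|_U,
\]
and shows that the graded module $\bigoplus_k \sF_{m+km_0}|_U$ is finitely generated over the ring $\bigoplus_k \sF_{km_0}|_U$.

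The second ingredient is the compatibility of the CFK decomposition with morphisms and tensor products. On a general complete-intersection curve $C\subset Y$, a hermitian flat sheaf restricts to a polystable degree-zero bundle while a generically ample sheaf restricts to an ample bundle, so $\operatorname{Hom}$ between the two vanishes on $C$. Since the vanishing of a morphism between torsion-free sheaves on $Y$ is detected on a general such $C$, this extends to $Y$: any morphism from a hermitian flat sheaf into $\sU_n\oplus \sA_n$ factors through $\sU_n$, and any morphism from a generically ample sheaf factors through $\sA_n$. In particular flat$\otimes$flat is flat, whereas flat$\otimes$ample and ample$\otimes$ample contribute only to the $\sA$-summand of a product.

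Case 1: $\sA_{m_0}=0$. Then $\operatorname{Sym}^k\sU_{m_0}$ is hermitian flat, so the first surjection above forces $\sA_{km_0}=0$ for $k\gg 0$. By the finite generation of $\bigoplus_k \sF_{m+km_0}|_U$ over $\bigoplus_k \sF_{km_0}|_U$, a variant of the same argument gives $\sA_{m+km_0}=0$ for every $m\in J$ and every $k$ sufficiently large. Fixing such $m$ and $k$, the multiplication component $\sA_m\otimes\sU_{km_0}\to\sF_{m+km_0}$ factors through $\sA_{m+km_0}=0$ by the previous paragraph and so is the zero map of sheaves. But for any $y\in U$ at which $\sA_m$ has nonzero fiber, multiplying a nonzero local section of $\sA_m$ with a nonzero local section of $\sU_{km_0}$ at $y$ produces a nonzero section of $\omega_{X_y}^{\otimes m+km_0}$, since $X_y$ is integral. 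This contradiction forces $\sA_m=0$ for every $m\in J$, and $f_*\omega_{X/Y}^{\otimes m}$ is hermitian flat throughout.

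Case 2: $\sA_{m_0}\ne 0$, and the symmetric conclusion $\sU_m = 0$ for every $m\in J$ is required. The direct dual of Case~1 fails, because a nonzero flat summand $\sU_{m_0}$ yields a nonzero image of $\operatorname{Sym}^k\sU_{m_0}$ in $\sU_{km_0}$ for every $k\ge 1$ (the $k$-fold self-product of a nonzero flat section is nonzero on an integral fiber), so Serre vanishing alone cannot kill the target flat piece. My plan is instead to analyze the graded subring $T_\bullet\subset \bigoplus_k \sF_{km_0}|_U$ generated multiplicatively by $\sU_{m_0}$: each $T_k$ is a hermitian flat quotient of $\operatorname{Sym}^k\sU_{m_0}$, so pulling $T_\bullet$ back to the universal cover of $U$ trivializes it in every grading, and its relative $\operatorname{Proj}$ is an isotrivial subfibration of the Iitaka variety $W/U$. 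The main obstacle, where I expect most of the work to lie, is to argue that the coexistence of this flat isotrivial factor with the nonzero ample summand $\sA_{m_0}$ is incompatible with the $f|_V$-semi-ampleness of $\omega_{V/U}$ and the codimension-two condition on $Y\smallsetminus U$, forcing $\sU_{m_0}=0$. Once this is in hand, the $\operatorname{Sym}^k$-surjection yields $\sU_{km_0}=0$ for $k\gg 0$, and a variant of the multiplicative bookkeeping of Case~1 then propagates the vanishing of $\sU$ to every $m\in J$.
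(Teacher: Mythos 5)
Your proposal and the paper's proof share only the very first step: apply Theorem~\ref{thm:splittingpluri} to get decompositions $\sF_m = \sU_m \oplus \sA_m$. From there the paper goes in a completely different, and much shorter, direction: it restricts everything to a general complete intersection smooth curve $C\subset U$ (possible since $\codim(Y\smallsetminus U)\ge 2$), uses a base-change lemma (Lemma~\ref{lem:hyperplane}/Corollary~\ref{cor:genbc}) to identify $\sF_m|_C \simeq g_*\omega_{X_C/C}^{\otimes m}$ where $g = f|_{X_C}$ is a \emph{smooth} curve fibration with $\omega_{X_C/C}$ $g$-semi-ample, and then invokes the theorem of Esnault--Viehweg (Theorem~\ref{prop:det}): once $\det(g_*\omega^{\otimes m_0})$ is ample for some $m_0$, the bundle $g_*\omega_{X_C/C}^{\otimes m}$ is ample for \emph{every} $m\ge 2$ with nonzero pushforward. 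Since $\det(\sU_m)\cdot C = 0$, the ample bundle $g_*\omega_{X_C/C}^{\otimes m} \simeq \sU_m|_C \oplus \sA_m|_C$ cannot have a flat summand, forcing $\sU_m = 0$. This Esnault--Viehweg input is precisely the ``pure type'' mechanism the whole theorem is built on, and it is the one ingredient your plan lacks.

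There are two concrete gaps in the proposal. First, and decisively, you acknowledge in Case~2 ($\sA_{m_0}\ne 0$) that ``the main obstacle, where I expect most of the work to lie'' is unresolved; this is exactly the case the theorem is about, so the proof is incomplete as written. Second, your ``second ingredient'' asserts that on a general complete intersection curve $C$ one has $\Hom(\text{flat},\text{ample}) = 0$, and Case~1 uses this to conclude that a surjection $\operatorname{Sym}^k\sU_{m_0}\twoheadrightarrow \sA_{km_0}$ must vanish. That direction of the Hom-vanishing is false: a degree-zero polystable bundle on a curve can certainly map onto, or even surject onto, an ample bundle (e.g.\ $\sO_C^{\oplus 2}\twoheadrightarrow L$ for $L$ a globally generated ample line bundle). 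Only the opposite direction $\Hom(\text{ample},\text{flat}) = 0$ holds, since an ample bundle has all nonzero quotients of positive degree, which cannot embed in a slope-zero semistable bundle. So Case~1, as formulated, also does not close. If you want to salvage the multiplicative/Iitaka-fibration strategy you will in effect need to reprove the curve case of Esnault--Viehweg, which is where the real content lives; the paper instead just cites it and reduces to dimension one via the base-change lemma.
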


In \S\ref{sec:examples} 
we collect 
further instances of Catanese--Fujita--Kawamata decompositions.
\subsection*{Acknowledgments}
We thank Victor Gonz\'alez-Alonso 
for fruitful conversations. Moreover we thank  
Federico Caucci, Yajnaseni Dutta, Beppe Pareschi, Mihnea Popa and Luca Tasin
for comments on a first draft of the paper. We also thank Masataka Iwai for letting us know
about the papers \cite{HIM}, \cite{Iw} and \cite{IM}.

L.L.  was partially 
supported by GNSAGA, PSR Linea 4, and PRIN 2020: Curves, Ricci flat Varieties and their Interactions.

 During the preparation of this paper, Ch.S. was partially supported by NSF grant
 DMS-1551677 and by a grant from the Simons Foundation (817464, Schnell).
 He thanks the National Science Foundation
 and the Simons Foundation for their financial support; he also thanks the
 Max-Planck-Institute for Mathematics and the Kavli Institute for the Physics and
 Mathematics of the Universe for providing him with excellent working conditions.

\section{The minimal extension property}\label{sec:mep}

Let  $Y$ be a complex manifold of positive dimension $n$
and let 
 $( \sF , h )$ be a torsion-free sheaf  endowed with a singular hermitian  metric. 
We refer to \cite[\S 19]{HPS} for the definition  of  singular hermitian metrics with semi-positive  curvature 
on torsion-free sheaves. 
Denote   by 
 $B\subset \mathbb{C}^n$ the open unit ball centered at the origin with volume
$\mu(B) = \pi^n/n!$. Moreover  let 
$F_y$ be  the fiber of $\sF$ at a point $y\in Y$ over which  $\sF$ is locally free.
We recall the definition of the minimal extension property introduced in \cite[Definition 20.1]{HPS}.

\begin{defn}\label{def:mep}
The pair 
$(\sF,h)$ satisfies  the \emph{minimal extension property} if there exists an analytic nowhere dense  
 closed   subset $Z \subset Y$ such that $\sF$ is locally free on $Y\backslash Z$, and for every 
embedding $\iota \colon B \hookrightarrow Y$ of the open unit ball centered at  $\iota(0)=y \in Y\backslash Z$,  and vector $v\in F_y$  of 
length $\big|v\big|_{h,y} = 1$, there exists a holomorphic section $s\in H^0(B , \iota^* \sF)$  such that 
$$s(0) \,= \,  v \quad \mbox{ and }\quad \frac{1}{\mu(B)} \int_B \big| s \big|_h^2 \,\,  d\mu \, \leq \, 1.$$  
\end{defn}

We recall  a few  properties of the minimal extension property from \cite{HPS}.

\begin{prop}\label{prop:incl}
Let $(\sF , h) $ be a torsion-free sheaf  endowed with a singular hermitian  metric satisfying the minimal extension property
and let $b \colon \sF \hookrightarrow \sG$ be an inclusion of torsion-free sheaves.
If $b$ is 
generically an isomorphism, then $h$ extends to a singular hermitian metric $h_{\sG}$
 on $\sG$  satisfying  the minimal extension property.
Moreover, if $h$ has semi-positive curvature, then  
$h_{\sG}$ has semi-positive curvature as well.
 \end{prop}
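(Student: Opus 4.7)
The strategy is to build $h_\sG$ by transporting $h$ through $b^{-1}$ on the open locus where $b$ is an isomorphism, to use duality to verify the semi-positive curvature property when $h$ has it, and to check the minimal extension property by pushing sections forward via $b$.

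Let $T\subset Y$ be the (proper analytic) locus where $b$ is not an isomorphism, let $Z$ be the exceptional set from Definition~\ref{def:mep} for $(\sF,h)$, and let $S$ be the non-locally-free locus of $\sG$; set $Z'':=Z\cup T\cup S$. On $Y\setminus Z''$, both $\sF$ and $\sG$ are vector bundles and $b$ is an isomorphism of vector bundles, so I would define
\[
|v|_{h_\sG,y}\;:=\;|b_y^{-1}(v)|_{h,y}\qquad\text{for }y\in Y\setminus Z'' \text{ and }v\in G_y,
\]
writing $G_y$ for the fiber of $\sG$ at $y$. Since $Z''$ is analytic and nowhere dense, this already qualifies as a singular hermitian metric on the torsion-free sheaf $\sG$ in the sense of \cite[\S 19]{HPS}.

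To verify semi-positive curvature, assuming it for $h$, I would dualize. Because the cokernel of $b$ is torsion and $\sO_Y$ is torsion-free, the induced map $b^\vee\colon\sG^\vee\to\sF^\vee$ is itself injective and generically an isomorphism. For any open $V\subset Y$ and any section $t\in H^0(V,\sG^\vee)$, the image $b^\vee(t)$ is a genuine section of $\sF^\vee$ on all of $V$, and a pointwise check gives
\[
|t|_{h_\sG^\vee}\;=\;|b^\vee(t)|_{h^\vee}\quad\text{on }V\setminus Z''.
\]
The semi-positive curvature of $h$ makes $\log|b^\vee(t)|_{h^\vee}$ plurisubharmonic on $V$ (this is the Griffiths-type characterization used in \cite[\S 19]{HPS}), and this tautologically produces the plurisubharmonic extension of $\log|t|_{h_\sG^\vee}$ across $V\cap Z''$. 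I expect this step to be the main conceptual point: switching from $\sG$ to $\sG^\vee$ turns the a priori delicate problem of extending a psh function across $T\cup S$ into the trivial observation that $b^\vee(t)$ lives globally on $V$.

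For the minimal extension property for $(\sG,h_\sG)$ with exceptional set $Z''$, I would fix $y\in Y\setminus Z''$, an embedding $\iota\colon B\hookrightarrow Y$ with $\iota(0)=y$, and a unit vector $v\in G_y$. Setting $w:=b_y^{-1}(v)\in F_y$, which has unit $h$-length, the minimal extension property of $(\sF,h)$ produces $s\in H^0(B,\iota^*\sF)$ with $s(0)=w$ and $\frac{1}{\mu(B)}\int_B|s|_h^2\,d\mu\le 1$. The composition $\iota^*(b)\circ s\in H^0(B,\iota^*\sG)$ then satisfies $(\iota^*(b)\circ s)(0)=v$, and since $h_\sG$ and $h$ correspond under $b$ on the full-measure subset $B\setminus\iota^{-1}(Z'')$ of $B$, the same $L^2$-bound is inherited, which is what the minimal extension property demands.
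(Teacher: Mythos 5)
Your proof is correct and follows essentially the same route as the paper: the paper delegates the construction of $h_\sG$ and the preservation of semi-positive curvature to \cite[Proposition 19.3]{HPS} (which is proved exactly by the transport-via-$b^{-1}$ construction and the duality/psh-extension argument you spell out), and then observes, as you do, that the minimal extension property is inherited because sections of $\sF$ push forward under $b$ to sections of $\sG$ with the same $L^2$-norm. Your reconstruction is accurate, including the correct handling of the exceptional set $Z''$ and the pointwise isometry $b_y^\vee$ on duals.
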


\begin{proof}
The proposition is essentially proved in \cite[Proposition 19.3]{HPS} where it is stated in greater generality. 
It only remains to note that 
the minimal extension property holds for $(\sG, h_{\sG})$, but this is true   because every section of $\sF$ is also
a section of $\sG$.
\end{proof}

%

\begin{prop}\label{prop:quotient}
Let $(\sF, h )$ be a torsion-free sheaf  endowed with a singular hermitian  metric satisfying the minimal extension property. 
If $\varphi \colon \sF \twoheadrightarrow \sE$ is a quotient onto a torsion-free sheaf and $h'$ is the induced metric, then 
$(\sE,h')$ satisfies  the minimal extension property.
\end{prop}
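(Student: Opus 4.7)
The plan is to verify Definition~\ref{def:mep} for $(\sE, h')$ directly, by lifting a prescribed fiber vector to $\sF$, invoking the minimal extension property of $(\sF,h)$, pushing the resulting section forward, and then taking a limit. As the exceptional set for $(\sE, h')$ I would take $Z' := Z \cup W$, where $Z \subset Y$ is the nowhere dense analytic subset furnished by Definition~\ref{def:mep} for $(\sF, h)$ and $W \subset Y$ is the non-locally-free locus of $\sE$; both are proper closed analytic subsets of $Y$, and so is their union.

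Fix $y \in Y \setminus Z'$, an embedding $\iota \colon B \hookrightarrow Y$ with $\iota(0) = y$, and a vector $e \in E_y$ with $|e|_{h',y} = 1$. By the definition of the quotient metric,
\[
|e|_{h',y}^2 \;=\; \inf_{\varphi(v) = e} |v|_{h,y}^2,
\]
so for each $\delta > 0$ I would choose a lift $v_\delta \in F_y$ with $\varphi(v_\delta) = e$ and $|v_\delta|_{h,y} \le 1 + \delta$. Applying the minimal extension property of $(\sF, h)$ to the unit vector $v_\delta / |v_\delta|_{h,y}$ furnishes a section $s_\delta \in H^0(B, \iota^*\sF)$ that takes value $v_\delta / |v_\delta|_{h,y}$ at the origin and satisfies $\mu(B)^{-1}\int_B |s_\delta|_h^2 \, d\mu \le 1$. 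Setting $u_\delta := |v_\delta|_{h,y} \cdot \varphi(s_\delta) \in H^0(B, \iota^*\sE)$, I obtain $u_\delta(0) = e$, and the pointwise inequality $|\varphi(w)|_{h'} \le |w|_h$ built into the definition of $h'$ yields
\[
\frac{1}{\mu(B)} \int_B |u_\delta|_{h'}^2 \, d\mu \;\le\; (1 + \delta)^2.
\]

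The remaining and most delicate step is to pass to the limit $\delta \to 0$; this is necessary because the infimum defining the quotient metric is generally not attained, so one cannot simply apply Definition~\ref{def:mep} to a single unit lift of $e$. Since $h'$ inherits semi-positive curvature from $h$ under the quotient, the function $|u|_{h'}^2$ is plurisubharmonic for every holomorphic section $u$ of $\iota^*\sE$, and the sub-mean value inequality turns the Bergman space $\sH$ of such sections with finite $L^2(h')$-norm into a Hilbert space on which point evaluation at $0$ is a bounded linear functional. Extracting a weakly convergent subsequence $u_{\delta_k} \rightharpoonup u$ in $\sH$ then yields $u(0) = e$ by continuity of evaluation and $\mu(B)^{-1}\int_B |u|_{h'}^2 \, d\mu \le 1$ by weak lower semi-continuity of the norm, verifying Definition~\ref{def:mep} for $(\sE, h')$ at $y$. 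The main obstacle is exactly this weak-compactness argument, which hinges on the Bergman-space structure afforded by the semi-positive curvature that $h'$ inherits from $h$.
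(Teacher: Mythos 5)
Your setup and the core mechanism — enlarge $Z$ to $Z' = Z \cup S(\sE)$, lift a unit vector of $E_y$ to $F_y$, apply the minimal extension property of $(\sF,h)$, push the resulting section forward by $\varphi$, and use the pointwise inequality $|\varphi(s)|_{h'} \leq |s|_h$ to control the $L^2$ average — are exactly what the paper does. The divergence is in one step, and it is where your argument has a genuine gap.

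You assert that the infimum
\[
|e|_{h',y} \;=\; \inf\bigl\{\, |v|_{h,y} : v \in F_y,\ \varphi_y(v) = e \,\bigr\}
\]
``is generally not attained,'' and on that basis you replace the direct lift by a family of $(1+\delta)$-lifts, extract sections $u_\delta$, and pass to a weak limit in a Bergman space of $\iota^*\sE$-valued sections. This limiting step requires bounded point evaluation at $0$, which you obtain from the sub-mean-value inequality for $|u|_{h'}^2$; that inequality in turn requires $|u|_{h'}^2$ to be plurisubharmonic, i.e.\ that $h'$ (and hence $h$) has semi-positive curvature. But Proposition~\ref{prop:quotient} makes no curvature assumption on $h$ — it only assumes the minimal extension property. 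So your proof establishes the proposition only under an extra hypothesis that is not part of the statement, and the weak-compactness argument would simply not get off the ground without it.

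The fix is to observe that, contrary to your claim, the infimum \emph{is} attained. For $y \notin Z'$, the sheaves $\sF$ and $\sE$ are locally free at $y$, $\varphi_y : F_y \to E_y$ is a surjection of finite-dimensional vector spaces, and $h_y$ is an honest Hermitian inner product on $F_y$. The fiber $\varphi_y^{-1}(e)$ is an affine translate of $\ker\varphi_y$, and the $h_y$-distance from the origin to a finite-dimensional affine subspace is attained at the unique point orthogonal to $\ker\varphi_y$. Hence there exists $v \in F_y$ with $\varphi_y(v)=e$ and $|v|_{h,y}=|e|_{h',y}=1$, one applies the minimal extension property of $(\sF,h)$ directly to this $v$, and the section $\varphi(s)$ already has $L^2$ average $\leq 1$. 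No limiting argument — and no curvature — is needed.
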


\begin{proof}
Let $Z = Z(\sF)$ be as in Definition \ref{def:mep} and let 
$S(\sE)$ be the locus where $\sE$ is not locally free. Set $Z' = Z \cup S(\sE)$ and
let $y\in Y \backslash Z'$. 
For $w\in E_y$ the induced metric $h'$ on $\sE$ is defined by
\begin{equation}\label{eq:inducedm}
 \big| w \big|_{h',y} = {\rm inf} \{ \,  \big| v \big|_{h, y } \; \big| \; v\in F_y \mbox{ and } \varphi_y(v ) = w   \, \}.
 \end{equation}
(If $\varphi_y=0$, then the metric is $+\infty$ for all $w\neq 0$.)
Now let $w\in E_y$ be such that $ \big|w \big|_{h',y}=1$ and let  $B\subset Y$ be the embedding of the 
unit ball centered at $y\in Y \backslash Z'$.
Then there exists $v\in F_y$ such that $\big| v \big|_{h,y}=1$ 
and, by the minimal extension property of $\sF$,
a holomorphic section $s\in H^0( B, \sF|_B)$ such that $s(0)= v$ and 
$\frac{1}{ \mu(B) }\int_B \big| s \big|^2_{h} d \mu \leq 1$.
 As  $\big| \varphi(s) \big|_{h',y} \leq \big| s \big|_{h,y} $ for almost every $y\in B$, this  yields  inequalities
  $$\frac{1}{\mu(B)} \int_B  \big| \varphi(s) \big|^2_{h'} \, d\mu \, \leq \, \frac{1}{\mu(B)} \int_B \big|s \big|^2_{h} \, d \mu \, \leq \, 1.$$
\end{proof}

The following result generalizes
\cite[Theorem 26.4]{HPS}.

%

\begin{prop}\label{propsplit}
Suppose   $Y$ is a compact complex manifold 
and let $(\sF,h)$ be a torsion-free sheaf  endowed with a  singular hermitian metric  of  
semi-positive curvature and satisfying the minimal extension property. 
If $f\colon \sF \twoheadrightarrow \sU$ is a quotient onto a  vector bundle $\sU$ endowed with a  smooth  hermitian flat metric, 
then there exists a morphism $s\colon \sU \to \sF$ such that $f \circ s = {\rm id}_{\sU}$.
\end{prop}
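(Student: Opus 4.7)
The plan is to rephrase the splitting as a section-lifting problem. A morphism $s\colon \sU \to \sF$ with $f \circ s = {\rm id}_\sU$ corresponds to a global holomorphic section $\sigma \in H^0(Y, \sV)$, where $\sV := \sF \otimes \sU^*$, whose image under $q := f \otimes {\rm id}_{\sU^*}\colon \sV \twoheadrightarrow \sU \otimes \sU^*$ equals the identity section ${\rm id}_\sU$. Since $\sU^*$ is hermitian flat, tensoring $h$ with the flat metric on $\sU^*$ endows $\sV$ with a singular hermitian metric $h_\sV$ of semi-positive curvature that satisfies the minimal extension property---both properties being preserved under tensoring with a hermitian flat bundle.

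I would construct $\sigma$ first on the open dense locus $Y^* \subset Y$ where $\sV$ is locally free and $h_\sV$ is a smooth, nondegenerate hermitian metric. For $y \in Y^*$, set $\sigma(y) \in \sV_y$ to be the unique preimage of ${\rm id}_\sU(y)$ under $q_y$ that lies in $(\ker q_y)^\perp$. This defines a $C^\infty$ section on $Y^*$, and the crux is to show that $\bar\partial \sigma = 0$ there. By Proposition~\ref{prop:quotient}, the quotient metric $h^{\rm quot}$ induced by $q$ on $\sU \otimes \sU^*$ has semi-positive curvature and the minimal extension property. Because $\sU \otimes \sU^*$ is hermitian flat and $Y$ is compact, a rigidity argument forces $h^{\rm quot}$ to be itself hermitian flat: the line bundle case is the standard observation that a plurisubharmonic weight on a hermitian flat line bundle over compact $Y$ is constant, and the vector bundle statement follows by comparing determinant metrics and applying Griffiths positivity to locally flat test sections (for instance, after passing to a finite étale cover of $Y$ over which the flat bundle splits into unitary characters). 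Consequently the $C^\infty$ subbundle $(\ker q)^\perp \subset \sV|_{Y^*}$, with the restricted metric, is hermitian flat. The standard second-fundamental-form identity for the quotient then reads
\[
  0 \;=\; \Theta_{h^{\rm quot}} \;=\; \Theta_{h_\sV}\big|_{(\ker q)^\perp} \,+\, B \,\wedge\, B^*,
\]
with both summands on the right non-negative in the Griffiths sense. Hence the second fundamental form $B$ vanishes, $(\ker q)^\perp$ is a holomorphic subbundle of $\sV|_{Y^*}$, and $\sigma$ is holomorphic on $Y^*$.

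To finish, I would extend $\sigma$ across $Y \smallsetminus Y^*$. By the flatness of $h^{\rm quot}$, the norm $|\sigma|_{h_\sV}^2 = |{\rm id}_\sU|^2_{h^{\rm quot}}$ is constant on $Y^*$, so $\sigma$ is uniformly bounded. Since $Y \smallsetminus Y^*$ lies in a proper analytic subset and $\sV$ is torsion-free, the minimal extension property for $(\sV, h_\sV)$, together with this $L^2$-bound on $\sigma$, yields an extension of $\sigma$ to a global holomorphic section in $H^0(Y, \sV)$; the corresponding morphism $s\colon \sU \to \sF$ satisfies $f \circ s = {\rm id}_\sU$. The hardest step is the rigidity claim that a singular hermitian metric with semi-positive curvature on a hermitian flat vector bundle over a compact complex manifold is automatically hermitian flat. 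While the line bundle (and therefore the determinant) case is immediate from the maximum principle, the general vector-bundle statement requires carefully combining Griffiths positivity with the constancy of plurisubharmonic functions on $Y$.
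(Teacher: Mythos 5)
Your approach is genuinely different from the paper's. The paper does not attempt to construct the splitting section directly: it first reduces to the case of an \emph{irreducible} unitary flat bundle $\sU$, tensors by $\sU^\vee$ and composes with the trace map $\sU\otimes\sU^\vee\to\sO_Y$ to obtain a quotient $\sF\otimes\sU^\vee\twoheadrightarrow\sO_Y$, invokes \cite[Theorem~26.4]{HPS} (the rank-one version of the statement, quoted as a black box) to split that, and then upgrades the resulting nonzero map $\sU\to\sF$ to a genuine splitting by Schur's lemma. You instead try to re-derive the analytic content of \cite[Theorem~26.4]{HPS} for arbitrary $\sU$ by taking the orthogonal lift of $\mathrm{id}_\sU$ and running a second-fundamental-form argument.

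There are genuine gaps in your version. First, and most seriously, you work on ``the open dense locus $Y^*\subset Y$ where $\sV$ is locally free and $h_\sV$ is a smooth, nondegenerate hermitian metric.'' A singular hermitian metric in the sense of \cite[\S19]{HPS} is only required to be measurable with $\log|\cdot|_{h^*}$ psh; there is no open set on which it is smooth, so $\Theta_{h_\sV}$ is not a form, the curvature identity $\Theta_{h^{\mathrm{quot}}}=\Theta_{h_\sV}|_{(\ker q)^\perp}+B\wedge B^*$ is not available, and the claim $\bar\partial\sigma=0$ has no foundation. (This is precisely the technical obstacle that \cite[Theorem~26.4]{HPS} is designed to overcome; quoting it, as the paper does, avoids having to regularize.) Second, your rigidity argument is incorrect as sketched: an irreducible unitary representation $\pi_1(Y)\to U(r)$ need not become a sum of characters on any finite-index subgroup (e.g.\ Zariski-dense unitary representations of surface groups), so passing to a finite \'etale cover does not split $\sU$ into line bundles, and even if it did you would face a descent problem. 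The correct route to the flatness of $h^{\mathrm{quot}}$ is simply to combine Proposition~\ref{prop:quotient} with Theorem~\ref{thm:c1}, both of which you had available. Third, extending $\sigma$ across $Y\smallsetminus Y^*$ is not a direct consequence of the minimal extension property (which is a local statement about lifting a single fiber vector into a ball with an $L^2$ bound, not about Riemann-type continuation of a global section of a torsion-free sheaf); one would need to argue separately that the bounded holomorphic section of $\sV|_{Y^*}$ extends to a section of $\sV$ rather than merely of $j_*(\sV|_{Y^*})$.
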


 \begin{proof}
 Set $r = {\rm rk} \big( \sU \big)>0$.
The bundle
 $\sU$ is associated to a representation 
$\pi_1(Y) \to U( r )$ of the fundamental group of  $Y$  to the unitary group $U(r)$. 
Hence 
$\sU$ decomposes as
a direct sum of vector bundles arising  from irreducible unitary representations. 
Without loss of generality we can suppose that $\sU$ is irreducible.
  
Consider the quotients 
$$\sF \otimes \sU^{\vee} \, \twoheadrightarrow \sU\otimes \sU^{\vee} \stackrel{tr}{\twoheadrightarrow} \sO_Y$$
where
the first is induced by $f$ and 
 the second is the trace map of $\sU$. Since the metric on $\sU$, and hence
on $\sU^{\vee}$, is flat, the induced singular hermitian metric on the sheaf $\sF
\otimes \sU^{\vee}$   has semi-positive curvature and the minimal extension
property.
 By \cite[Theorem 26.4]{HPS} there exists a splitting $s'\colon \sO_Y \to \sF \otimes \sU^{\vee}$, 
and hence  a non-trivial morphism
$s''\colon \sU \to \sF$ such that $f \circ s'' \neq 0$.  By Schur's Lemma the composition 
 $f \circ s''$ is an isomorphism  and 
$s :=  s'' \circ ( f \circ s'' )^{-1} $ splits $f$.
\end{proof}

\begin{rmk}
Following \cite[Theorem 1.4]{HIM} the previous proposition remains valid 
when $\sF$ is reflexive and $h$ does not necessarily satisfy the minimal extension property.  
\end{rmk}

Finally we 	recall the following theorem  \cite[Theorem 26.1]{HPS} which is 
based on an earlier result of Cao and P\u{a}un \cite[Lemma 5.3]{CP}.
We define the determinant  of a torsion-free sheaf $\sF\neq 0$ as 
${\rm det} ( \sF) =  \big( \bigwedge^{\rm rk (\sF) } \sF \big)^{**}$.

\begin{theorem}\label{thm:c1}
Let   $Y$ be  a compact complex manifold 
and let  $(\sF,h)$ be  a nonzero  torsion-free sheaf endowed with a  singular hermitian metric. Suppose  
     $h$ has  semi-positive curvature and  satisfies  the minimal extension property. If 
$c_1 \big( \det (\sF) \big) = 0$ in $H^2(Y,\mathbb R)$, then $\sF$ is   locally free, $h$ is smooth    and $(\sF, h)$ is hermitian flat. 
\end{theorem}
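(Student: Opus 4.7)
The plan is to pass to the determinant line bundle, exploit its hermitian flatness, and then bootstrap back to $\sF$ via a combination of the Hadamard inequality, the sub-mean value property for plurisubharmonic functions, and Jensen's inequality. This is the strategy of Cao--P\u{a}un's Lemma 5.3 as reformulated in \cite[Theorem 26.1]{HPS}.

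First I would reduce to the case of the determinant line bundle $\det \sF = \bigl( \bigwedge^{{\rm rk}\, \sF} \sF \bigr)^{**}$. The metric $h$ induces a singular hermitian metric $\det h$ on $\det \sF$ whose curvature current is semi-positive and represents $c_1(\det \sF) = 0 \in H^2(Y, \mathbb{R})$. A closed semi-positive $(1,1)$-current in the zero cohomology class on a compact K\"ahler manifold must vanish identically (its local potentials glue, via the $dd^c$-lemma, to a global plurisubharmonic function, which is constant on compact $Y$). Hence $\det h$ is a smooth hermitian flat metric.

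Next I would use the minimal extension property to manufacture a hermitian flat frame for $\sF$ near each point $y \in Y \setminus Z$. Fix such a $y$, choose an $h_y$-orthonormal basis $v_1, \dots, v_r$ of $F_y$ with $r = {\rm rk}\, \sF$, and apply Definition \ref{def:mep} to extend each $v_i$ to a holomorphic section $s_i \in H^0(B, \iota^*\sF)$ on a unit ball embedded at $y$, satisfying $s_i(0) = v_i$ and $\tfrac{1}{\mu(B)} \int_B |s_i|^2_h \, d\mu \leq 1$. The wedge $\sigma = s_1 \wedge \dots \wedge s_r$ is a holomorphic section of $\iota^* \det \sF$ with $|\sigma(0)|^2_{\det h} = \det \langle v_i, v_j\rangle_h = 1$, and since $\det h$ is flat, $\log |\sigma|^2_{\det h}$ is plurisubharmonic on $B$ (equal to $\log|f|^2$ in a flat trivialization). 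Chaining the sub-mean value inequality, Hadamard's inequality $|\sigma|^2_{\det h} \leq \prod_i |s_i|^2_h$, Jensen's inequality for the concave function $\log$, and the MEP bound yields
\begin{align*}
0 \;=\; \log |\sigma(0)|^2_{\det h} &\;\leq\; \tfrac{1}{\mu(B)} \int_B \log |\sigma|^2_{\det h} \, d\mu \\
&\;\leq\; \sum_i \tfrac{1}{\mu(B)} \int_B \log |s_i|^2_h \, d\mu \\
&\;\leq\; \sum_i \log \left( \tfrac{1}{\mu(B)} \int_B |s_i|^2_h \, d\mu \right) \;\leq\; 0.
\end{align*}
Equality must hold throughout, forcing $|s_i|_h \equiv 1$ and pointwise $h$-orthonormality of the $s_i$ on $B$, so $(s_1, \dots, s_r)$ is a local hermitian flat frame near $y$.

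Globalizing, $\sF|_{Y \setminus Z}$ is hermitian flat; since $Z$ is a nowhere dense analytic subset of a smooth manifold, the flat unitary structure extends across $Z$ by a Hartogs-type argument applied to the $U(r)$-valued transition data, and $\sF$ itself is locally free and hermitian flat on all of $Y$. The main obstacle in this program is the rigidity step: converting the equality cases in the displayed chain into a pointwise statement requires careful treatment of the singular hermitian metric $h$, for which $|s_i|^2_h$ is only measurable a priori, and is where the sharpness of the $L^2$-constant in Ohsawa--Takegoshi (the defining feature of the minimal extension property) is indispensable; this is the technical heart of \cite[Lemma 5.3]{CP}.
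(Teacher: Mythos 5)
The paper does not prove Theorem~\ref{thm:c1}; it simply cites \cite[Theorem~26.1]{HPS} and the underlying \cite[Lemma~5.3]{CP}. Your reconstruction of that argument is faithful and correct in its main lines: reducing to $\det\sF$, showing $\det h$ is flat, and then running the chain ``sub-mean value $\to$ Hadamard $\to$ Jensen $\to$ minimal extension bound'' to force pointwise orthonormality of the extended sections $s_1,\dots,s_r$ is precisely the Cao--P\u{a}un mechanism, and you correctly note that the rigidity analysis of the equality case is where the sharp $L^2$-constant from the minimal extension property is essential.

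Two loose ends are worth flagging. First, your step ``a closed semi-positive $(1,1)$-current in the zero class must vanish'' invokes the $dd^c$-lemma and hence needs $Y$ K\"ahler; the theorem as stated in the paper says only ``compact complex manifold,'' so either one argues directly that $c_1(\det\sF)=0$ together with semi-positive curvature forces the local potentials to glue to a global psh function (e.g.\ by first choosing a smooth flat reference metric on $\det\sF$, which does exist once one knows the line bundle sits in the image of $H^1(Y,U(1))$), or one should acknowledge the implicit K\"ahler/projective hypothesis present in \cite{HPS}. Second, the globalization ``Hartogs-type argument on $U(r)$-valued transition data'' is glossed over: the delicate point is that $Z$ may be a divisor, so the flat $U(r)$-structure on $Y\setminus Z$ does not automatically extend; what actually makes the extension work is that the locally constructed flat frames have bounded norm $\equiv 1$, so the flat hermitian metric extends as a bounded (hence smooth) metric on the reflexive hull $\sF^{**}$, and one then uses the minimal extension property again (or the argument of \cite[Theorem~26.1]{HPS}) to conclude $\sF=\sF^{**}$. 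These are exactly the technical issues handled carefully in the cited references, so your sketch is on target but understates where the second delicate step lies.
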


\section{Catanese--Fujita--Kawamata decompositions}
%
%

\subsection{Proof of Theorem \ref{thm:splitting}}

Set $n = \dim Y$. We may suppose $n>0$ and   $\sF \neq 0$.
 By \cite[Proposition 25.1, \S26 and Definition 19.1]{HPS} the line bundle ${\rm det} ( \sF )$  admits a singular hermitian 
metric with semi-positive curvature. Hence ${\rm det} ( \sF )$  is pseudo-effective and 
for any very ample line bundle $A$ on $Y$  the \emph{degree} of $\sF$ satisfies 
$${\rm deg}_A( \sF) \; := \;  \big(  A^{n-1} \,  \cdot \,  \sF \big)    \; \geq \; 0$$
by Nakai--Moishezon's Theorem.
We define the $A$\emph{-slope} of   $\sF$   as
$$\mu^A(\sF) \, := \,  \frac{\deg_A(\sF)} { {\rm rk}(\sF)} $$ and 
 say that it  is \emph{semistable} (with respect to $A$) if for  every nonzero coherent sub-module $\sE \subset \sF$  the inequality  
$\mu^A(\sE) \leq \mu^A(\sF)$ holds.
Let
\begin{equation}\label{eq:HN}
0 \, = \,  \sN_0 \, \subsetneq \, \sN_1 \, \subsetneq  \, \ldots \, \subsetneq\,  \sN_d \, = \, \sF
\end{equation}
be the Harder--Narasimhan filtration of $\sF$. Hence
for any $i=1,\ldots ,d$
 the quotients $\sN_i / \sN_{i-1}$ are torsion-free  and semistable.
Moreover  the slopes $$\mu^A_i := \mu^A (\sN_i / \sN_{i-1})$$ satisfy
$$\mu^A_1 \, > \, \mu^A_2 \, > \, \ldots \,  >\,  \mu^A_d $$
\cite[Proposition-Definition 1.13]{Ma}.
We denote by
$$\sQ\; := \; \sF / \sN_{d-1} \; = \;  \sN_d / \sN_{d-1}$$
the minimal destabilizing quotient of $\sF$ and set
 $$\sL := {\rm det}(\sQ)$$ for the determinant of $\sQ$.
By \cite[Lemma 2.4.3]{PT} and \cite[Proposition 25.1]{HPS} both
$\sQ$ and  $\sL$ admit
a singular hermitian metric with semi-positive curvature. 
Hence 
$\sL$ is  pseudo-effective   and  ${\rm deg}_A(\sL) \geq 0$.
We distinguish    two cases:  ${\rm deg}_A(\sL)>0$ and ${\rm deg}_A(\sL) =0$.

Let's begin with the   case ${\rm deg}_A(\sL) >0$. 
We are going to show that $\sF$ is already generically ample.
Let $H = A^{\otimes a}$ be a very ample line bundle
with $a\gg 0$   so that Flenner's Theorem \cite[Theorem 7.1.1]{HL} applies
to a general complete intersection smooth curve $C$ cut out by divisors in $| H |$ and contained 
in the locus where $\sF$ is locally free.
It follows that the Harder--Narasimhan filtration \eqref{eq:HN} of $\sF$ restricts to the Harder--Narasimhan filtration
$$ 
0 \, = \,  \sM_0 \, \subsetneq \, \sM_1 \, \subsetneq  \, \ldots \, \subsetneq\,  \sM_d \, = \, \sF|_C$$
of $\sF|_C$. Here the sheaves  $\sM_i := \sN_i|_C$ are locally free and  semistable, 
and 
$$ \frac{{\rm deg} \big( \sM_i / \sM_{i-1} \big) }{{\rm rk }\big( \sM_i / \sM_{i-1} \big)  } > 
 \frac{{\rm deg} \big( \sM_{i+1} / \sM_{i} \big) }{{\rm rk }\big( \sM_{i+1} / \sM_{i} \big)  }$$
 for all $i=1, \ldots , d-1$.
Since 
$${\rm deg}\big(\sM_i / \sM_{i-1} \big) \, = \, \Big(   H^{n-1} \,  \cdot \,  \sN_i / \sN_{i-1}   \Big) \, = \,
a^{n-1}\Big( A^{n-1} \, \cdot \,   \sN_i / \sN_{i-1}  \Big),
$$ 
the \emph{minimal slope}  of $\sF|_C$ satisfies 
$$\mu_{{\rm min}} \big( \sF|_C \big) \, := \,  \frac{{\rm deg} \big( \sM_{d} / \sM_{d-1} \big) }{{\rm rk }\big( \sM_{d} / \sM_{d-1} \big)  } \, = \, a^{n-1} \mu_d^A \, = \, a^{n-1} \frac{{\deg}_A ( \sL)}{{\rm rk} (\sQ)} \, > \, 0.$$
 By \cite[Theorem 2.1]{Br} $\sF|_C$ is an  ample bundle. 
In this case we set $\sU=0$ and $\sA = \sF$.

Now let us  suppose that   ${\rm deg}_A(\sL) =0$. 
We will first  show that $c_1(\sL) =0$ in  $H^2(Y , \mathbb R)$.
Let $H=A^{\otimes a}$ be a very ample line bundle as before with $a$ sufficiently large.
Let $D_1 , \ldots ,D_{n-2} \in | H |$ be general members such that 
for all $i=2, \ldots , n-2$
each partial  intersection
$$V_i\, := \,  D_1 \, \cap \, D_2 \, \cap \, \ldots \, \cap \, D_i$$ is a smooth and irreducible ample divisor in $V_{i-1}$. 
Hence  $S:= V_{n-2}$ is a smooth  surface with $[S] = H^{n-2}$ in 
$H_4 ( Y ; \mathbb Z)$ such that  
the restriction map
\begin{equation}\label{eq:lef}
H^2( Y ; \mathbb Z) \to H^2 (S ; \mathbb Z) \quad \mbox{is injective}
\end{equation}
(see  Lefschetz's Hyperplane Theorem \cite[Theorem 3.1.17]{Laz}).
Moreover, we can choose the general divisors $D_i \in | H |$ such that the restrictions $\sL|_{V_i}$ are pseudo-effective.
Hence
$$0 \; = \;  \big(    H^{n-1} \, \cdot \, \sL \big) \;  = \;  \big( H|_S \, \cdot \, \sL|_S \big)  $$ 
from which we deduce that $\sL|_S$ is numerically trivial
as $H|_S$ is ample and $\sL|_S$ is a limit of effective classes (\emph{cf}.
\cite[Theorem 1.4.29]{Laz}).  By \eqref{eq:lef}  the claim follows.

Since $c_1 ( \sL) = 0$, by
 Proposition \ref{prop:quotient} 
and Theorem \ref{thm:c1} the bundle  $\sQ$ is  hermitian flat.
Moreover, by Proposition \ref{propsplit}  
there exists a  decomposition  $\sF \simeq \sQ \oplus \sN_{d-1}$
so that we only need to  prove that $\sN_{d-1}$ is generically ample (or zero).
Let  $C$ be a general complete intersection smooth curve cut out by divisors in $| H |$ and consider the 
Harder--Narasimhan filtration 
$$0 \, = \, \sM_0 \, \subsetneq \, \sM_1 \, \subsetneq \, \ldots \, \subsetneq \, \sM_{d-2} \, \subsetneq \, \sM_{d-1}, $$
of $\sM_{d-1} := \big( \sN_{d-1}\big)|_C$
where, as before, $\sM_i := \sN_i|_C$.
As the minimal slope of $\sM_{d-1}$ satisfies
$$\mu_{\rm min} \big( \sM_{d-1} \big) \, := \, 
\frac{\deg \big( \sM_{d-1} / \sM_{d-2} \big)}{{\rm rk} \big( \sM_{d-1} / \sM_{d-2} \big) } \, = \, a^{n-1} \mu_{d-1}^A \, > \, 
a^{n-1} \mu_d^A \, = \, 
a^{n-1} \frac{{\rm deg}_A ( \sL) }{{\rm rk} ( \sQ) }\, = \, 0,$$
the bundle $\sM_{d-1}$ is ample. In this case we set $\sU=\sQ$ and $\sA = \sN_{d-1}$.


%

 \section{Further applications}\label{sec:examples}

\begin{cor}
Let $f\colon X \to Y$ be a surjective morphism of smooth projective varieties and let $L$ be a  semiample line bundle on $X$.
Then $f_*(\omega_{X/Y} \otimes L)$ admits a Catanese--Fujita--Kawamata decomposition.
\end{cor}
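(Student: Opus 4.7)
The plan is to apply Theorem \ref{thm:splitting} to $\sF = f_*(\omega_{X/Y} \otimes L)$, so the task reduces to endowing this sheaf with a singular hermitian metric of semi-positive curvature satisfying the minimal extension property.

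First I would produce a natural metric on $L$. Since $L$ is semiample, choose $k \geq 1$ so that $L^{\otimes k}$ is globally generated; the resulting morphism $\varphi \colon X \to \mathbb{P}^N$ satisfies $\varphi^{\ast}\sO_{\mathbb{P}^N}(1) = L^{\otimes k}$, and pulling back the Fubini--Study metric and extracting $k$-th roots yields a smooth hermitian metric $h_L$ on $L$ with semi-positive curvature; in particular $\sI(h_L) = \sO_X$. I would then appeal to the twisted versions of \cite[Theorem 1.1]{PT} and \cite[Theorem 27.1]{HPS}, which apply to the direct image $f_*(\omega_{X/Y} \otimes L)$ of $\omega_{X/Y}$ twisted by any line bundle $(L, h_L)$ equipped with a singular hermitian metric of semi-positive curvature, and endow $\sF$ with a singular hermitian metric of semi-positive curvature satisfying the minimal extension property. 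The pluricanonical case used in the proof of Theorem \ref{thm:splittingpluri} is the specialization to $L = \omega_{X/Y}^{\otimes (m-1)}$ with an inductively constructed metric, so the semiample case should follow from essentially the same references.

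With such a metric on $\sF$ in hand, Theorem \ref{thm:splitting} immediately produces the Catanese--Fujita--Kawamata decomposition. The main (and really only) subtlety I anticipate is extracting the combined statement (semi-positive curvature \emph{and} minimal extension property for the \emph{twisted} pushforward) from \cite{PT} and \cite{HPS}; this should amount to a careful reading of their arguments rather than a substantively new input. As a fallback, one could instead take $k$ large enough that $|L^{\otimes k}|$ contains a smooth divisor $D$, form the associated degree-$k$ cyclic cover $\pi \colon X' \to X$, and use the standard identity $\pi_*\omega_{X'/Y} = \bigoplus_{j=0}^{k-1} \omega_{X/Y} \otimes L^{\otimes j}$ to realize $f_*(\omega_{X/Y}\otimes L)$ as a direct summand of $(f\pi)_*\omega_{X'/Y}$; one would then have to supplement this with the fact that a direct summand of a Catanese--Fujita--Kawamata sheaf is again Catanese--Fujita--Kawamata, verified on a general complete intersection curve by separating the zero-slope and positive-slope pieces of the Harder--Narasimhan filtration.
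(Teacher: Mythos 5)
Your proposal is correct and matches the paper's proof essentially verbatim: equip $L$ with a smooth semi-positive metric with trivial multiplier ideal, invoke the twisted P\u{a}un--Takayama result to get a singular hermitian metric on $f_*(\omega_{X/Y}\otimes L)$ with semi-positive curvature and the minimal extension property, then apply Theorem~\ref{thm:splitting}. The only slip is the citation: the relevant HPS reference for the twisted pushforward is Theorem~21.1 (which is stated precisely for $f_*(\omega_{X/Y}\otimes L)$ with $(L,h)$ of semi-positive curvature and $\sI(h)=\sO_X$), not Theorem~27.1 (the pluricanonical statement), so the ``subtlety'' you flag about combining the references dissolves once the correct theorem is cited.
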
 
 
 \begin{proof}
%
%
%

 As a consequence of the semiampleness, the line bundle $L$
  admits a smooth hermitian metric $h$ with
 semi-positive curvature; in particular, the multiplier ideal $\sI(h) = \sO_X$ is
 trivial.  By P\u{a}un--Takayama's theorem  \cite[Theorem 1.1]{PT} and \cite[Theorem 21.1]{HPS},  
 $f_* ( \omega_{X/Y} \otimes L)$ admits a singular hermitian metric
 with semi-positive curvature and  the minimal extension property.
The corollary   follows from Theorem \ref{thm:splitting}.
 \end{proof} 
 
%
%
%

\begin{cor}\label{thm:splittinghigher}
If $f \colon X \to Y$ is a smooth fibration of smooth projective   varieties and $E$ is a Nakano semi-positive vector bundle on $X$,
then  
$ R^j f_* ( \omega_{X/Y} \otimes E)$ 
admits a Catanese--Fujita--Kawamata decomposition for every  $j\geq 0$. 
\end{cor}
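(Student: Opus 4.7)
The plan is to verify the hypotheses of Theorem \ref{thm:splitting} for the sheaf
$\sF := R^j f_* ( \omega_{X/Y} \otimes E )$ and then invoke that theorem.

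First I would check that $\sF$ is locally free, hence in particular torsion-free. Since $f$ is a smooth projective morphism and $E$ is Nakano semi-positive, the fiberwise cohomology dimensions $h^j \bigl( X_y, \omega_{X_y} \otimes E|_{X_y} \bigr)$ are locally constant on $Y$: this may be deduced from Takegoshi's decomposition theorem for Nakano semi-positive bundles, combined with Grauert's base change.

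Next, I would equip $\sF$ with its canonical $L^2$-Hodge metric: at each $y\in Y$, a vector in $\sF_y \cong H^j\bigl(X_y, \omega_{X_y}\otimes E|_{X_y}\bigr)$ is represented by its unique $\bar\partial$-harmonic representative with respect to some auxiliary K\"ahler metric on $X$, and its length is the $L^2$-norm of that representative. Extending Berndtsson's positivity theorem from the case $j=0$ and line bundles, one has that on a smooth projective fibration the higher direct image $\sF$ inherits a natural Hodge metric of Griffiths semi-positive curvature; this is, a fortiori, a smooth singular hermitian metric with semi-positive curvature in the sense of \cite[\S19]{HPS}.

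The main obstacle will be establishing the minimal extension property. For $j=0$ it is precisely the content of \cite[Theorem 1.1]{PT} and \cite[Theorem 21.1]{HPS}, which rely on the Ohsawa–Takegoshi extension theorem with optimal constant. For $j\geq 1$ the analogous statement amounts to an optimal $L^2$-extension theorem for $\bar\partial$-harmonic $E$-valued $(n,0)$-forms along the fibers of an embedded ball $B \hookrightarrow Y$; such a result is provided by the recent works \cite{HIM} and \cite{IM} in this exact setting. Once these two properties are verified, Theorem \ref{thm:splitting} applies directly and yields the desired Catanese–Fujita–Kawamata decomposition of $\sF$.
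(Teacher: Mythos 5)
Your overall plan — verify the hypotheses of Theorem \ref{thm:splitting} and apply it — matches the paper's strategy, but the crucial middle step is misidentified, and the references you invoke for the minimal extension property do not in fact supply it.

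The paper's proof goes through Mourougane--Takayama \cite[Theorem 1.1]{MT}, which states that $R^j f_*(\omega_{X/Y}\otimes E)$ is a \emph{Nakano} semi-positive vector bundle (hence locally free, so the Takegoshi/Grauert step in your plan is unnecessary), and then uses the general fact that a Nakano semi-positive vector bundle automatically satisfies the minimal extension property (\cite[Example 2.16]{SY}); this is explicitly flagged in the paper's introduction. Your plan only gets as far as \emph{Griffiths} semi-positivity for the Hodge metric (``Berndtsson's positivity theorem''), which is strictly weaker and does not by itself imply the minimal extension property. You then punt the minimal extension property for $j\geq 1$ to \cite{HIM} and \cite{IM}, but those papers do not prove an optimal $L^2$-extension theorem for higher-degree harmonic forms in this setting; in the present paper they appear only for unrelated purposes (a reflexive variant of Proposition \ref{propsplit}, and the nef cotangent bundle example). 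So the chain of implications you propose has a gap exactly at the minimal extension property, which is the step the paper handles by upgrading Griffiths to Nakano positivity via \cite{MT} and then invoking the Nakano $\Rightarrow$ MEP implication.
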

\begin{proof}
The main result of \cite[Theorem 1.1]{MT} proves that $R^j f_* ( \omega_{X/Y} \otimes E)$ is a Nakano semi-positive vector bundle 
for all $j\geq 0$. 
 On the other hand a Nakano semi-positive vector bundle satisfies the minimal extension property (\emph{cf}. \cite[Example 2.16]{SY}).
The  result follows by Theorem \ref{thm:splitting}.
 
\end{proof}

 \begin{rmk}
 As a further example, 
 Iwai and Matsumura prove that a nef  cotangent bundle $\Omega_X^1$ of 
a smooth projective variety
admits a Catanese--Fujita--Kawamata decomposition (\emph{cf}. \cite[Proposition 4.2]{IM}).
  \end{rmk}
 
 \begin{rmk}
 Let $f\colon X \to A$ be a surjective morphism from a smooth projective complex variety $X$ 
 to an abelian variety $A$. By employing \cite[Theorem C]{LPS} it is possible to  check that 
  the hermitian flat part
 of the Catanese--Fujita--Kawamata decomposition of 
  $f_*\omega_{X/A}^{\otimes m}$ ($m\geq 1$) consists  of a  direct-sum of finitely many 
 torsion line bundles in $\Pic^0(A)$.

 \end{rmk}

%

\section{Proof of Theorem \ref{thm:puretype}} 
 
In this section we  prove Theorem \ref{thm:puretype}.
In order to do so,   
we begin by recalling the following result of Esnault and Viehweg, which in fact holds 
under more general assumptions.

Given a fibration $f\colon X \to Y$ of varieties, a line bundle $L$ on $X$ is $f$\emph{-semi-ample}
if for some positive integer $N>0$ the natural morphism $f^*f_*L^{\otimes N} \to L^{\otimes N}$ is surjective.

\begin{theorem}\label{prop:det}
Let $g \colon X \to C$ be a smooth fibration from a smooth projective variety to a smooth projective curve such that $\omega_{X/C}$ is $g$-semi-ample.
If ${\rm det } ( g_* \omega_{X/C}^{\otimes m} )$ is an ample line bundle for some $m\geq 1$, then for all $m\geq 2$ the bundle 
$ g_* \omega_{X/C}^{\otimes m} $ is  ample, if not the zero sheaf.
\end{theorem}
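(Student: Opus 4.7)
The plan is to combine the Catanese--Fujita--Kawamata decomposition over the curve $C$ (in the pluricanonical formulation due to Iwai) with Viehweg's theory of variation for smooth families with semi-ample relative canonical bundle. By \cite[Theorem~1.4]{Iw} applied over $C$, every nonzero $g_*\omega_{X/C}^{\otimes m}$ splits as
\[
g_*\omega_{X/C}^{\otimes m} \;\simeq\; \sU_m \oplus \sA_m,
\]
where $\sU_m$ is hermitian flat (a unitary polystable bundle of degree $0$ on $C$) and $\sA_m$ is either zero or ample. Since $\deg \sU_m = 0$, the hypothesis that $\det g_*\omega_{X/C}^{\otimes m_0}$ is ample is equivalent to $\deg \sA_{m_0} > 0$, in particular $\sA_{m_0} \neq 0$. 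The theorem thus reduces to showing $\sU_m = 0$ for every $m \geq 2$.

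The next step is to translate the determinantal hypothesis into a statement about the family itself. By Viehweg's theory of variation, the ampleness of $\det g_*\omega_{X/C}^{\otimes m_0}$ on the curve $C$ is equivalent to the family $g\colon X \to C$ having \emph{maximal variation}, i.e., the induced moduli map from $C$ to the appropriate moduli stack of fibers of $g$ is non-constant; in particular $g$ is not generically isotrivial.

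Suppose for contradiction that $\sU_{m_1} \neq 0$ for some $m_1 \geq 2$. Passing to a finite étale cover $\pi\colon \tilde C \to C$ trivializing the unitary representation defining $\sU_{m_1}$ yields on $\tilde X := X \times_C \tilde C$ a nonzero trivial subbundle $\sO_{\tilde C}^{\oplus r} \hookrightarrow \tilde g_*\omega_{\tilde X/\tilde C}^{\otimes m_1}$, i.e., $r \geq 1$ ``constant'' sections of $\omega_{\tilde X/\tilde C}^{\otimes m_1}$ over $\tilde X$. Using the $\tilde g$-semi-ampleness of $\omega_{\tilde X/\tilde C}$ and, if necessary, passing to high multiples $m_1 k$ where flat summands persist by the non-vanishing of self-multiplication of pluricanonical sections, one arranges for such constant sections to control the relative Iitaka fibration of $\tilde g$, forcing a birational product structure on $\tilde X/\tilde C$. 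This contradicts the maximal variation from the previous paragraph, so $\sU_{m_1} = 0$.

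The main obstacle is this last step: rigorously translating the existence of a flat pluricanonical summand in some degree $m_1 \geq 2$ into isotriviality of the family. This step uses the $g$-semi-ampleness of $\omega_{X/C}$ essentially---it is what allows flat pluricanonical sections to be interpreted as genuine trivializations of the relative Iitaka fibration, rather than as Hodge-theoretic artifacts---and is typically carried out via Viehweg's fiber-product iteration, in which one studies the flat direct summands of $g^{(k)}_*\omega_{X^{(k)}/C}^{\otimes m_1}$ on the $k$-fold fiber products $X^{(k)} = X \times_C \cdots \times_C X$ and identifies them with isotrivial subfamilies, thereby obstructing the moduli map from being non-constant.
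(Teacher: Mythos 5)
The paper offers no proof of this theorem: it simply cites \cite[Theorem 0.1]{EV} (Esnault--Viehweg). Your proposal therefore attempts a genuinely different route, via the Catanese--Fujita--Kawamata decomposition and Viehweg's variation theory. That would be interesting if it worked, but as written there is a material gap that goes beyond the caveat you yourself flag.

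The problematic step is the passage from ``nonzero flat summand $\sU_{m_1}$'' to ``birational product structure on $\tilde X/\tilde C$.'' After pulling back along an \'etale cover trivializing the unitary representation, you obtain a trivial subbundle $\sO_{\tilde C}^{\oplus r} \hookrightarrow \tilde g_*\omega_{\tilde X/\tilde C}^{\otimes m_1}$, i.e.\ finitely many sections of $\omega_{\tilde X/\tilde C}^{\otimes m_1}$. These span only a (possibly tiny) subsystem of the fiberwise $m_1$-canonical system; there is no reason they determine the relative Iitaka model, and so no reason they force a product. The auxiliary device you invoke --- passing to multiples $m_1k$ ``where flat summands persist by the non-vanishing of self-multiplication of pluricanonical sections'' --- is also unjustified: the image of $\mathrm{Sym}^k\sU_{m_1}$ under multiplication lands in $g_*\omega_{X/C}^{\otimes m_1 k}$, but nothing guarantees it lies in the flat summand $\sU_{m_1 k}$ rather than in $\sA_{m_1 k}$, so the flat piece need not propagate. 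Finally, you fall back on Viehweg's fiber-product iteration to close the argument; but that iteration is precisely the engine of the Esnault--Viehweg proof that the paper quotes, so at that point you would not be giving an alternative proof but reproducing the cited one. The equivalence ``$\det g_*\omega_{X/C}^{\otimes m_0}$ ample $\iff$ maximal variation'' that you state also deserves a citation or argument in the direction you use (ample determinant implies non-isotriviality; this part is indeed elementary, via base change to a product), but the real issue is the one above. In its current form the proposal is a plausible heuristic, not a proof.
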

\begin{proof}
The proof is a special case of 
\cite[Theorem 0.1]{EV}.
\end{proof}

\begin{cor}\label{cor:purec}
Let $g\colon X \to C$ be a smooth fibration from a smooth projective variety to a smooth projective curve
such that $\omega_{X/C}$ is $g$-semi-ample.
Denote  $J = \{ m \in \mathbb{N}_{\geq 2} \, \big | \, g_*\omega_{X/C}^{\otimes m} \neq 0 \}$.
Then either $g_*\omega_{X/C}^{\otimes m}$ is  ample for every $m\in J$, or  hermitian flat for every  $m\in J$.
 \end{cor}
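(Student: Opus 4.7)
The plan is to combine the Catanese--Fujita--Kawamata decomposition supplied by Theorem \ref{thm:splittingpluri} with Esnault--Viehweg's Theorem \ref{prop:det} to force the uniform dichotomy.

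First, for every $m \in J$, Theorem \ref{thm:splittingpluri} (whose hypotheses hold since $g$ is a surjective morphism of smooth projective varieties and $m \geq 2$) furnishes a decomposition
\[
g_* \omega_{X/C}^{\otimes m} \; \simeq \; \sU_m \oplus \sA_m,
\]
with $\sU_m$ hermitian flat and $\sA_m$ either zero or generically ample. Since $C$ is a smooth curve, any torsion-free sheaf on $C$ is locally free, and by the convention adopted in the introduction, generically ample on a curve just means ample. Thus $\sU_m$ is a unitary flat vector bundle (in particular $\deg \sU_m = 0$), while if $\sA_m \neq 0$ it is an ample vector bundle, so its determinant has strictly positive degree.

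Next I would split into two mutually exclusive cases. If $\sA_m = 0$ for every $m \in J$, then $g_* \omega_{X/C}^{\otimes m}$ is hermitian flat for all such $m$ and the first alternative of the corollary holds. Otherwise, there is some $m_0 \in J$ with $\sA_{m_0} \neq 0$. Using $\det(\sF \oplus \sG) = \det \sF \otimes \det \sG$, one computes
\[
\deg \det \bigl(g_* \omega_{X/C}^{\otimes m_0}\bigr) \; = \; \deg \det \sU_{m_0} + \deg \det \sA_{m_0} \; = \; \deg \det \sA_{m_0} \; > \; 0,
\]
so $\det (g_* \omega_{X/C}^{\otimes m_0})$ is an ample line bundle on $C$. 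Now the hypotheses of Theorem \ref{prop:det} are met ($g$ is a smooth fibration to a curve, $\omega_{X/C}$ is $g$-semi-ample, and $\det$ is ample for some $m = m_0 \geq 1$), so Theorem \ref{prop:det} propagates the amplitude to all $m \geq 2$: every nonzero $g_*\omega_{X/C}^{\otimes m}$ is ample. In particular $\sU_m = 0$ for every $m \in J$, and the second alternative holds.

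The argument is essentially an assembly job; the only step demanding any care is the verification that in Case 2 the determinant of $\sA_{m_0}$ has strictly positive degree, which rests on the definitional equality between ``generically ample'' and ``ample'' on a curve together with the fact that $\sA_{m_0}$ is locally free. Once that is in hand, Theorem \ref{prop:det} does the heavy lifting of transporting positivity from one tensor power to all others.
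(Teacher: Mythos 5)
Your proof is correct and follows essentially the same route as the paper's: obtain the Catanese--Fujita--Kawamata decomposition for each $m\in J$, split into the case where all $\sA_m$ vanish (giving hermitian flatness) and the case where some $\sA_{m_0}\neq 0$ forces $\det(g_*\omega_{X/C}^{\otimes m_0})$ to be ample, then apply Theorem \ref{prop:det} to propagate ampleness to all $m\in J$. The only cosmetic difference is that you cite Theorem \ref{thm:splittingpluri} where the paper cites Theorem \ref{thm:splitting} directly, and you spell out the degree computation on $\det$, which the paper leaves implicit.
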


\begin{proof}
By Theorem \ref{thm:splitting} 
 $g_*\omega_{X/C}^{\otimes m}$  decomposes
as $g_*\omega_{X/C}^{\otimes m} \simeq \sU_m \oplus  \sA_m$ with $\sU_m$  hermitian flat (or zero),  and 
$\sA_m$  ample (or zero). 
If $\sA_m =0$ 
for all $m\in J$, then $g_*\omega_{X/C}^{\otimes m}$ is hermitian flat for every  $m\in J$ and the proof is complete. 
On the
other hand, if 
$\sA_m \neq 0$ for some $m\in J$,
then ${\rm det} \big( g_*\omega_{X/C}^{\otimes m} \big) $ is ample, and 
by Theorem \ref{prop:det} the bundle $g_*\omega_{X/C}^{\otimes m}$    is itself  ample  for every $m\in J$.
\end{proof}

%
%

\begin{lemma}\label{lem:hyperplane}
Let $f\colon X \to Y$ be a fibration of smooth projective varieties.
Then 
for  any sufficiently general hyperplane section $H \subset Y$ the variety  $X_H := f^{-1}(H)$
is smooth and irreducible. Moreover, if $m$ is a positive integer, then there is an isomorphism 
$f_* \omega_{X/Y}^{\otimes m} |_H \simeq g_* \omega_{X_H / H }^{\otimes m}$
where $g := f|_{X_H}$.
\end{lemma}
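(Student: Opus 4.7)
The plan is to split the lemma into its two assertions. For the smoothness and irreducibility of $X_H$, I would fix a very ample line bundle $A$ on $Y$ and take $H$ a general member of $| A |$. Bertini on $Y$ gives $H$ smooth and irreducible (when $\dim Y \geq 2$; the case $\dim Y = 1$ is immediate from generic smoothness of $f$), and since the pull-back linear system $| f^* A |$ is base-point-free on $X$, a second application of Bertini on $X$ yields $X_H = f^{-1}(H)$ smooth. The restriction $g := f|_{X_H} \colon X_H \to H$ has the same general fiber as $f$, hence is surjective with connected general fiber, and together with irreducibility of $H$ this forces $X_H$ to be irreducible.

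For the isomorphism of pushforwards, the starting point is adjunction: since $X_H = f^* H$ as Cartier divisors, $\sO_X(X_H) = f^* \sO_Y(H)$, and combining the adjunction formulas for $X_H \subset X$ and $H \subset Y$ yields $\omega_{X_H/H}^{\otimes m} \simeq \omega_{X/Y}^{\otimes m}\big|_{X_H}$. I then apply $f_*$ to the short exact sequence
$$
0 \to \omega_{X/Y}^{\otimes m} \otimes f^* \sO_Y(-H) \to \omega_{X/Y}^{\otimes m} \to \omega_{X/Y}^{\otimes m}\big|_{X_H} \to 0.
$$
Using the projection formula on the first term and the Cartesian-square identification $f_*\bigl(\omega_{X/Y}^{\otimes m}\big|_{X_H}\bigr) \simeq i_* g_* \omega_{X_H/H}^{\otimes m}$ (with $i \colon H \hookrightarrow Y$), the resulting long exact sequence reads
$$
0 \to f_*\omega_{X/Y}^{\otimes m}(-H) \to f_*\omega_{X/Y}^{\otimes m} \to i_* g_* \omega_{X_H/H}^{\otimes m} \xrightarrow{\delta} R^1 f_* \omega_{X/Y}^{\otimes m}(-H) \xrightarrow{\cdot s} R^1 f_* \omega_{X/Y}^{\otimes m},
$$
where $s$ is a defining section of $H$. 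The cokernel of the first arrow is $i_*\bigl(f_* \omega_{X/Y}^{\otimes m}\big|_H\bigr)$, so the lemma reduces to the vanishing $\delta = 0$.

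The main obstacle is exactly this vanishing, which by exactness is equivalent to $s$ being a non-zero-divisor on $R^1 f_* \omega_{X/Y}^{\otimes m}$. To settle it I would use that any coherent sheaf on $Y$ has only finitely many associated primes and that a sufficiently general member of the very ample linear system $| A |$ contains no proper irreducible subvariety of $Y$; hence $H$ can be chosen general enough to avoid every proper associated prime of $R^1 f_* \omega_{X/Y}^{\otimes m}$, which makes multiplication by $s$ injective on it, forces $\delta = 0$, and promotes the injection to the desired isomorphism $f_* \omega_{X/Y}^{\otimes m}\big|_H \simeq g_* \omega_{X_H/H}^{\otimes m}$.
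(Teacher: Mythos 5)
Your proposal is correct and follows essentially the same route as the paper: the same adjunction short exact sequence $0 \to \omega_{X/Y}^{\otimes m}(-X_H) \to \omega_{X/Y}^{\otimes m} \to j_*\omega_{X_H/H}^{\otimes m} \to 0$, the same pushforward long exact sequence, and the same choice of a general $H$ avoiding the finitely many associated subvarieties of $R^{1}f_*\omega_{X/Y}^{\otimes m}$ so that multiplication by a defining section is injective and the connecting map vanishes. The paper simply cites Jouanolou's Bertini theorem for the smoothness and irreducibility of $X_H$ rather than arguing them directly, and you would want to phrase your generality condition as ``$H$ contains none of those finitely many proper associated subvarieties'' rather than ``contains no proper irreducible subvariety of $Y$,'' but these are cosmetic points.
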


\begin{proof}
The fact that $X_H$ is smooth and irreducible follows by  Bertini theorem, as 
stated in  \cite[Theorem 6.3]{J}. 
The following argument is inspired by \cite{Ko}. 
Consider the following cartesian diagram
$$
\centerline{ \xymatrix@=32pt{
 X_H \ar@{^{(}->}[r]^{j} \ar[d]^{g} & X \ar[d]^f \\
 H \ar@{^{(}->}[r]^{i} &  Y \\}} 
   $$
where $i$ and $j$ are the natural inclusions.
There is a short exact sequence for any $m\geq 1$
$$0 \to \omega_{X/Y}^{\otimes m} \otimes \sO_X (-X_H) \to \omega_{X/Y}^{\otimes m} \to j_*\omega_{X_H /H}^{\otimes m} \to 0$$
since  $N_{X_H / X} \simeq g^* N_{H/Y}$ and $\omega_{X/Y}^{\otimes m}|_{X_H} \simeq \omega_{X_H/H}^{\otimes m}$. 
By taking higher direct images, there is    a long exact sequence 
$$0 \to f_*   \omega_{X/Y}^{\otimes m} \otimes \sO_Y (-H)   \to f_* \omega_{X/Y}^{\otimes m} \to i_*g_*\omega_{X_H /H}^{\otimes m} \to$$
$$\to R^1f_*  \omega_{X/Y}^{\otimes m} \otimes \sO_Y (-H)   \to R^1 f_* \omega_{X/Y}^{\otimes m} \to i_* R^1 g_*\omega_{X_H /H}^{\otimes m} \to \ldots
$$
$$ \ldots \to 
R^jf_*   \omega_{X/Y}^{\otimes m} \otimes \sO_Y (-H)   \to R^j f_* \omega_{X/Y}^{\otimes m} \to i_* R^j g_*\omega_{X_H /H}^{\otimes m} 
\to \ldots
$$
For every  index $j\geq 0$ the morphism
$\psi_j \colon R^jf_*  \omega_{X/Y}^{\otimes m} \otimes \sO_Y (-H)   \to R^j f_* \omega_{X/Y}^{\otimes m}$ is
 obtained by tensoring 
the natural inclusion $\sO_Y (-H)   \to \sO_Y$ with  $R^jf_*  \omega_{X/Y}^{\otimes m}$. Hence, for a general 
hyperplane section $H$,  which does not contain any associated subvariety of $R^jf_*  \omega_{X/Y}^{\otimes m}$, the morphism
$\psi_j$ is injective. For $j=0$ it follows that the following sequence
\begin{equation}\label{eq:seqH}
0 \to f_*   \omega_{X/Y}^{\otimes m} \otimes \sO_Y (-H)   \to f_* \omega_{X/Y}^{\otimes m} \to i_*g_*\omega_{X_H /H}^{\otimes m} \to 0
\end{equation}
is short exact.  The desired isomorphism is obtained by restricting \eqref{eq:seqH} to $H$.
\end{proof}

 Repeated applications of the previous lemma yield the following corollary.

\begin{cor}\label{cor:genbc}
Let $f\colon X \to Y$ be a fibration of smooth projective varieties.
 Then for a general complete intersection smooth curve $C$ in $Y$ the 
variety $X_C = f^{-1}(C)$ is  smooth  and 
irreducible. Moreover, 
if $m$ is a positive integer, then 
there is an isomorphism
$f_* \omega_{X/Y}^{\otimes m} |_C \simeq g_* \omega_{X_C /C }^{\otimes m}$
where $g := f|_{X_C}$.
\end{cor}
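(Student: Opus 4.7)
The plan is to argue by induction on $n = \ddim Y$. When $n = 1$ there is nothing to prove, since $C = Y$ works. For the inductive step, assume $n \geq 2$ and that the result is known for fibrations with base of dimension $n-1$.

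By Lemma \ref{lem:hyperplane}, applied to a sufficiently general very ample hyperplane section $H \subset Y$, the variety $X_H := f^{-1}(H)$ is smooth and irreducible, and for every positive integer $m$ one has a canonical isomorphism
$$
f_* \omega_{X/Y}^{\otimes m} \big|_H \; \simeq \; (g_H)_* \omega_{X_H / H}^{\otimes m},
$$
where $g_H := f|_{X_H} \colon X_H \to H$ is again a fibration of smooth projective varieties, now with base of dimension $n-1$. By the inductive hypothesis applied to $g_H$, for a general complete intersection smooth curve $C \subset H$ the variety $(X_H)_C := g_H^{-1}(C)$ is smooth and irreducible, and we obtain an isomorphism
$$
(g_H)_* \omega_{X_H / H}^{\otimes m} \big|_C \; \simeq \; g_*\omega_{X_C/C}^{\otimes m},
$$
where $X_C = (X_H)_C = f^{-1}(C)$ and $g = f|_{X_C}$. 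Since $C$ is cut out in $H$ by general members of a very ample linear system on $H$, pulling these back to divisors on $Y$ and adjoining $H$ itself exhibits $C$ as a complete intersection in $Y$ cut out by divisors in a very ample system. Concatenating the two displayed isomorphisms yields the statement.

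The only delicate point is that at each stage of the induction one must choose the hyperplane section general enough not only for Bertini's theorem (guaranteeing smoothness and irreducibility of the preimage, via \cite[Theorem 6.3]{J}) but also to avoid the finitely many associated subvarieties of the coherent sheaves $R^j f_* \omega_{X/Y}^{\otimes m}$ involved in Lemma \ref{lem:hyperplane}. Since each of these is a finite avoidance condition, a general choice of the $n-1$ divisors cutting out $C$ (taken one at a time in each inductive step) satisfies all of them simultaneously, and the composition of isomorphisms gives exactly the asserted identification $f_*\omega_{X/Y}^{\otimes m}|_C \simeq g_*\omega_{X_C/C}^{\otimes m}$.
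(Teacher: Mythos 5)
Your proof is correct and takes essentially the same approach as the paper, which simply states that the corollary follows by ``repeated applications of the previous lemma'' without spelling out the induction. Your write-up makes explicit the inductive structure, including the (correct) observation that a general complete intersection curve in $Y$ can be realized as a general complete intersection curve inside a general hyperplane section $H$, so the two arguments coincide.
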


\begin{proof}[Proof of Theorem \ref{thm:puretype}]
Set $n = \dim Y$.
Thanks to Corollary \ref{cor:purec} 
we can assume that $n \geq 2$. Moreover,  
without loss of generality we can suppose that $J\neq \emptyset$. 
Recall that for every $m\in J$ there exists
a decomposition 
\begin{equation}\label{eq:recall}
\sF_m \, := \,  f_* \omega_{X/Y}^{\otimes m} \, \simeq \,  \sU_m \oplus \sA_m
\end{equation}
 where 
$\sU_m$ is hermitian flat (or zero), and $\sA_m$ is generically ample (or zero).
If $\sA_m = 0$ for all $m\in J$, then  
$\sF_m$ is hermitian flat for every  $m\in J$ and the proof is complete.
We may suppose that there exists an index $m_0\in J$ such that $\sA_{m_0} \neq 0$.
We aim to prove that  $\sU_m= 0$ for all $m\in J$.

Let $U\subset Y$ be an open subset 
as in the statement of the theorem 
and fix a very ample line bundle $H$ on $Y$. Moreover,  let 
$C\subset U$ be a general complete intersection smooth curve cut out by divisors in $|H|$ 
such that  ${\rm deg} ( \sA_{m_0}|_C ) >0$.
If $g\colon X_C \to C$ denotes the restriction of $f$ to $X_C := f^{-1} ( C )$, then 
by Corollary \ref{cor:genbc} there is an isomorphism
\begin{equation}\label{eq:bcres}
0 \; \neq \; \sF_{m_0}|_C  \;  \simeq \; g_*\omega_{X_C/C}^{\otimes m_0} \; \simeq \; \sA_{m_0}|_C \oplus \sU_{m_0}|_C.
\end{equation}
Hence we have
$${\rm deg}  \big( \det ( g_*\omega_{X_C/C}^{\otimes m_0} )  \big) \; = 
\;   \big( {\rm det} ( \sA_{m_0})  \, \cdot \, C \big) \;> \; 0$$
and 
by Corollary \ref{cor:purec} we conclude  that
\begin{equation}\label{eq:genample}
g_*\omega_{X_C/C}^{\otimes k} 
\mbox{ is ample for all } k\in J.
\end{equation}
If $m\in J$ is an arbitrary index, we can select a curve $C$ (depending on $m$) as above, and general enough so
that also the following isomorphism
\begin{equation}\label{eq:finalrestr}
0 \, \neq\, \sF_m|_C \;  \simeq \; g_*\omega_{X_C/C}^{\otimes m}
\end{equation}
holds.
By \eqref{eq:recall}, \eqref{eq:genample} and 
\eqref{eq:finalrestr} the bundle $g_*\omega_{X_C/C}^{\otimes m}\simeq \sU_{m}|_C \oplus \sA_{m}|_C$
is ample, 
and therefore $\sU_m|_C$ itself is ample, if not trivial.
Since $\big(  {\rm det} (\sU_m )\,  \cdot \,  C \big) =0$, this forces ${\rm det} (\sU_m)|_C=0$, and thus $\sU_m =0$.
 \end{proof}

\bibliographystyle{amsalpha}
\bibliography{bibl}

\end{document}